\documentclass[11pt,a4paper]{amsart}
\usepackage{geometry}
\usepackage[english]{babel}
\usepackage{amssymb,enumerate,latexsym,hyperref,color,graphicx,cite}

\newenvironment{proof*}[1]{\medskip\noindent\textbf{#1\ }}{\hspace*{\fill}$\Box$\medskip}
\newtheorem{theorem}{Theorem}[section]
\newtheorem{lemma}[theorem]{Lemma}

\theoremstyle{remark}
\newtheorem{remark}[theorem]{Remark}
\newtheorem{example}[theorem]{Example}
\newtheorem{problem}[theorem]{Problem}
\newtheorem{conjecture}[theorem]{Conjecture}
\newtheorem{question}[theorem]{Question}

\newtheorem{mainthm}{\bf Theorem}

\newtheorem{coralph}[mainthm]{\bf Corollary}
\newtheorem{mainexample}[mainthm]{\bf Example}

\newcommand{\B}{{\mathbf B}}

\newcommand{\Z}{{\mathbf Z}}

\newcommand{\CC}{{\mathbb C}}

\newcommand{\QQ}{{\mathbb Q}}
\newcommand{\RR}{{\mathbb R}}

\newcommand{\TT}{{\mathbb T}}

\newcommand{\ZZ}{{\mathbb Z}}

\newcommand{\SL}{{\rm SL}}

\newcommand{\be}[1]{\begin{equation} \label{#1} }
\newcommand{\ee}{\end{equation}}
\newcommand{\beq}{\begin{equation}}

\def \Diff{{\rm Diff}}

\def \hx0{\hat{x_0}}

\def \B{\mathcal{B}}

\def \Z{\mathcal{Z}}

\begin{document}
\author{Jiesong Zhang}
\address{Department of Mathematics\\ Kungliga Tekniska högskolan, Lindstedtsvägen 25\\ SE-100 44 Stockholm\\ Sweden}
\email{jiesongz@kth.se}
\title[Invariant complex structures]{Invariant complex structures for affine automorphisms: a cocycle viewpoint}

\begin{abstract}
We prove that if a holomorphic diffeomorphism of a compact complex manifold is bi-Lipschitz conjugate to an ergodic affine automorphism \(A\) on \(\Gamma\backslash G\), then the conjugacy is \(C^\infty\). Moreover, if \(A\) is weakly mixing, then the induced complex structure on \(\Gamma\backslash G\) is left-invariant. As applications, we establish a regularity bootstrap result for holomorphic Anosov diffeomorphisms bi-Lipschitz conjugate to affine models, as well as a holomorphic analogue of the rigidity theorem for higher-rank abelian Anosov actions by Hertz--Wang \cite{hw14}. 

The key observation is that the condition for a diffeomorphism to preserve a complex structure has the same form as the cocycle compatibility relation appearing in the study of centralizers. This places invariant complex structures and centralizers within a common \(\mathbb Z^2\)-cocycle framework. From this viewpoint, our main result may be regarded as a holomorphic counterpart of the Lipschitz centralizer rigidity theorem of Damjanovi\'c--Wilkinson--Wu--Xu \cite{dwwx25} for affine automorphisms.

\end{abstract}

\maketitle

\section{Introduction}
An important problem in complex geometry is to determine which smooth real manifolds admit complex structures, and to understand the space of all such structures. A classical and still open example is the Hopf problem, which asks whether \(S^6\) admits a complex structure. In holomorphic dynamics, it is natural to consider a dynamical analogue: given a smooth diffeomorphism \(f\colon M\to M\), what are the complex structures on \(M\) for which \(f\) becomes holomorphic?
\begin{problem}[Invariant complex structures]\label{problem 1}
Let \(M\) be an even-dimensional smooth real manifold admitting complex structures, i.e., the space \(\mathcal{J}_M\) of complex structures on \(M\) is nonempty. Let \(f\colon M\to M\) be a \(C^\infty\) diffeomorphism. Describe the subset
\[
    \mathcal{J}_M(f)\subset \mathcal{J}_M
\]
consisting of \(f\)-invariant complex structures on \(M\), or equivalently, the space of
complex structures on \(M\) with respect to which \(f\) is holomorphic.
\end{problem}

This problem is particularly interesting when one imposes dynamical assumptions on \(f\). For instance, if \(f\colon M\to M\) is a diffeomorphism of a compact \(4\)-manifold with positive topological entropy, then any complex structure with respect to which \(f\) is holomorphic must be Kähler, and the resulting complex surfaces can be further classified up to birational equivalence \cite[Theorem~10.1]{can14}.

Another important case is given by Anosov systems. Ghys conjectured \cite{ghy95} that every holomorphic Anosov diffeomorphism is holomorphically conjugate to a holomorphic diffeomorphism of a complex infra-nilmanifold. This conjecture was proved by Ghys \cite{ghy95} for complex surfaces, and by Cantat \cite{can04} for projective varieties under the additional assumption that the stable and unstable distributions \(E^s\) and \(E^u\) are holomorphic. From the viewpoint of Problem~\ref{problem 1}, this conjecture can be formulated as follows: if an Anosov diffeomorphism \(f\colon M\to M\) admits an invariant complex structure \(J\), then \((M,J)\) should be biholomorphic to a complex infra-nilmanifold, and \(f\) should be biholomorphically conjugate to a holomorphic affine automorphism.

Problem \ref{problem 1} is challenging even in specific settings.  For example, it remains open whether a compact complex manifold diffeomorphic to \(\mathbb T^{2d}\), \(d\geq 3\), and supporting a holomorphic Anosov diffeomorphism must be a complex torus\footnote{For $d \leq 2$, this is always true by uniformization theorem and Kodaira classification.}. This difficulty is also illustrated by an example of Xu--Zhang \cite{xz25}. For every \(d\geq 5\), they construct a linear partially hyperbolic automorphism  \(L:\TT^{2d} \to \TT^{2d}\) which is holomorphic both for the standard complex  structure and for a non-standard complex structure \(J\). In the latter case, the resulting complex manifold is not Kähler, and hence is not a complex torus.

A key feature of the linear maps appearing in the example of Xu--Zhang is that they are not ergodic and hence not weakly mixing. In this note, we prove a rigidity result showing that this phenomenon cannot occur in the weakly mixing case:  if \(A\colon \Gamma\backslash G\to \Gamma\backslash G\) is weakly mixing and holomorphic with respect to a complex structure $J$, then $J$ is a left-invariant complex structure (see Section~\ref{section complex nilmanifold} for the definition). This gives an answer to Problem \ref{problem 1} for weakly mixing affine automorphisms. 

The proof is based on a simple observation: once Problem~\ref{problem 1}
is formulated in the language of \textit{cocycles over abelian actions}, it becomes closely related to the following classical centralizer problem.

\begin{problem}[Centralizer]\label{problem 2}
Let \(f\colon M\to M\) be a \(C^\infty\) diffeomorphism of a compact manifold.
Describe the \(C^\infty\)-centralizer of \(f\), namely
\[
\Z^\infty(f):=\{g\in \Diff^\infty(M): g\circ f=f\circ g\}.
\]
\end{problem}

\subsection{$\ZZ^2$-cocycles: centralizers and invariant complex structures}
\label{sec central}
We now explain the relation between centralizers and invariant complex structures from the cocycle point of view. Let \(\alpha\colon \mathbb Z^2\to \Diff^\infty(M)\) be an abelian action, and let \(G\) be a group. A \(G\)-valued cocycle over \(\alpha\) is a map
\[
\B\colon \mathbb Z^2\times M\to G
\]
satisfying the cocycle identity
\[
\B(\mathbf n+\mathbf m,x)
=
\B(\mathbf n,\alpha^{\mathbf m}x)\,\B(\mathbf m,x),
\qquad
\mathbf n,\mathbf m\in \mathbb Z^2,\ x\in M.
\]
Equivalently, if \(\alpha\) is generated by two commuting maps 
\[
f=\alpha^{(1,0)},\qquad g=\alpha^{(0,1)},
\]
then the cocycle is determined by its values over the generators,
\[
\B_f(x):=\B((1,0),x),\qquad
\B_g(x):=\B((0,1),x),
\]
satisfying
\begin{equation}\label{eq compatibility}
    \B_f(gx)\,\B_g(x)=\B_g(fx)\,\B_f(x).
\end{equation}
\begin{example}[Centralizers]\label{ex centralizer}
Let \(f\colon M\to M\) be a \(C^\infty\) diffeomorphism of a compact manifold. If \(g\in \Z^\infty(f)\), then the relation \(f\circ g=g\circ f\) implies
\[
D f_{gx}\circ Dg_x
=
Dg_{fx}\circ Df_x .
\]
Thus the pairs
\[
(f,\B_f)=(f,Df),
\qquad
(g,\B_g)=(g,Dg).
\]
satisfy equation \eqref{eq compatibility}.
\end{example}

\begin{example}[Invariant complex structures]\label{ex complex-structure}
Let \(f\colon M\to M\) be a \(C^\infty\) diffeomorphism of a compact manifold. Suppose that \(f\) is holomorphic with respect to a complex structure \(J\colon TM\to TM\). Then
\[
Df_x\circ J_x=J_{fx}\circ Df_x .
\]
In other words, the holomorphicity condition is precisely equation \eqref{eq compatibility} for the two commuting maps \(f\) and \(\mathrm{Id}_M\), with
\[
(f,\B_f)=(f,Df),
\qquad
(g,\B_g)=(\mathrm{Id}_M,J).
\]
\end{example}
An interesting case is that when $f=A:\Gamma\backslash G\to \Gamma\backslash G$ is an affine automorphism, where $G$ is a simply connected real Lie group, and \(\Gamma<G\) is a cocompact lattice. In \cite{dwwx25}, the authors study the Lipschitz centralizer
\[\Z^{\mathrm{Lip}}(A):=\{f\in \mathrm{Homeo}(\Gamma\backslash G): f \text{ is bi-Lipschitz and } f\circ A=A\circ f\}\]
of $A$ and show that
\begin{itemize}
    \item if $A$ is ergodic with respect to the Haar measure $m$, then $\Z^{\mathrm{Lip}}(A) \subset \Diff^\infty(M) $ is a finite-dimensional Lie group consisting of $C^\infty$ diffeomorphisms; 
    \item if $A$ is weakly mixing with respect to $m$, then $\Z^{\mathrm{Lip}}(A) \subset\mathrm{Aff}(\Gamma\backslash G) $ consists of affine automorphisms.
\end{itemize}

In this note, we establish an analogue of the result above for $A$-invariant complex structures. 
\subsection{Main results}

\begin{mainthm}\label{theorem main}
Let \(f\colon X\to X\) be a holomorphic diffeomorphism of a compact complex manifold $X$ with complex structure $J_X$. Let \(G\) be a simply connected real Lie group, \(\Gamma<G\) be a cocompact lattice, and $m$ be the Haar measure on $\Gamma\backslash G$. Suppose that there exists a bi-Lipschitz homeomorphism \(h\colon X\to \Gamma\backslash G\) such that
\[
        h\circ f=A\circ h
\]
for some affine automorphism \(A\colon \Gamma\backslash G\to \Gamma\backslash G\).
\begin{itemize}
    \item If $A$ is ergodic with respect to $m$, then $h$ is $C^\infty$, and the push-forward \(J=h_*J_X\) belongs to a finite-dimensional linear subspace $\mathcal E_A\subset C^\infty( \Gamma\backslash G,\operatorname{End}(\mathfrak g))$ depending only on \(A\). In particular,
\[
h:(X,J_X)\to(\Gamma\backslash G,J)
\]
is biholomorphic, and \(A\) is holomorphic with respect to \(J\). 
    \item Moreover, if $A$ is weakly mixing with respect to $m$, then \(J=h_*J_X\) is left-invariant. 
\end{itemize}
\end{mainthm}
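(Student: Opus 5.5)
The plan is to push the complex structure to $\Gamma\backslash G$ and treat it as a measurable cocycle over $A$. Since $h$ is bi-Lipschitz, Rademacher's theorem makes it differentiable $m$-almost everywhere, with $Dh$ and $Dh^{-1}$ essentially bounded. Trivialize $T(\Gamma\backslash G)$ by right translates of $\mathfrak g$. Then
\[
J:=Dh\circ J_X\circ Dh^{-1}
\]
is an $L^\infty$ map $\Gamma\backslash G\to \operatorname{End}(\mathfrak g)$ with $J^2=-\mathrm{Id}$ almost everywhere.

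Write $A(\Gamma x)=\Gamma \sigma(x) g_0$ with $\sigma$ an automorphism of $G$. In this trivialization the derivative of $A$ is a constant linear map $L\in \mathrm{Aut}(\mathfrak g)$ (up to the usual $\mathrm{Ad}$ correction). Holomorphicity of $f$, i.e.\ Example~\ref{ex complex-structure}, conjugated by $h$, gives
\[
J(Ax)=L\,J(x)\,L^{-1}\quad \text{a.e.}
\]
Thus $J$ is an $L^\infty$ (hence $L^2$) eigenfunction with eigenvalue $1$ of the Koopman-type operator $U\Phi:=L^{-1}(\Phi\circ A)L$ on $L^2(\Gamma\backslash G,\operatorname{End}(\mathfrak g))$. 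This is exactly the situation of the centralizer problem in Example~\ref{ex centralizer}, with $Dg$ replaced by $J$.

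The key step is to show that the $U$-invariant subspace is finite-dimensional and consists of smooth functions. This is the cocycle rigidity used in \cite{dwwx25}, which I would follow.
\begin{itemize}
\item Decompose $L^2(\Gamma\backslash G)=\bigoplus_\pi \mathcal H_\pi$ into $A$-permuted pieces, using the spectral theory of the Koopman operator of $A$. By ergodicity, on the orthocomplement of the discrete spectrum the Koopman operator has no eigenvectors of any finite-dimensional twist.
\item Concretely, $\Phi\circ A=L\Phi L^{-1}$ means $\Phi$ spans a finite-dimensional $A$-invariant space. Its components lie in the span of eigenfunctions of the Koopman operator with eigenvalues among ratios of eigenvalues of $L$, which are finitely many.
\item For ergodic affine automorphisms, the eigenfunctions are characters pulled back from a maximal toral/nil factor. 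Each eigenspace is finite-dimensional: an ergodic affine map permutes characters with infinite orbits except on finitely many, so the eigenspaces are spanned by finitely many smooth functions.
\end{itemize}
Defining $\mathcal E_A$ as the space of $\Phi$ with $\Phi\circ A=L\Phi L^{-1}$ then gives a finite-dimensional space of $C^\infty$ functions depending only on $A$, and $J\in\mathcal E_A$ after modification on a null set.

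Next I would upgrade the regularity of $h$. Since $J$ is smooth and $J^2=-\mathrm{Id}$, it is a smooth almost complex structure. The map $h^{-1}$ is Lipschitz and satisfies $Dh^{-1}\circ J=J_X\circ Dh^{-1}$ almost everywhere, so it is a weak (Sobolev $W^{1,\infty}$) pseudo-holomorphic map between almost complex manifolds. Elliptic regularity for the nonlinear Cauchy--Riemann equation then makes $h^{-1}$ smooth; in local holomorphic charts on $X$ this is a linear first-order elliptic system. Bi-Lipschitz together with smoothness gives $Dh^{-1}$ invertible almost everywhere, and I expect invertibility everywhere to follow because pseudo-holomorphic homeomorphisms between equidimensional almost complex manifolds have nonvanishing Jacobian. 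Hence $h$ is a $C^\infty$ diffeomorphism, $J$ is integrable as the pushforward of the integrable structure $J_X$, $h$ is biholomorphic, and $A$ is $J$-holomorphic.

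For the weakly mixing case, weak mixing means the Koopman operator has no eigenvalues except $1$, with constant eigenfunctions. Each component of $\Phi\in\mathcal E_A$, after putting $L$ in Jordan form, satisfies generalized eigen-equations. I would argue by induction on the Jordan blocks: the top component is an eigenfunction, hence constant with eigenvalue $1$. The next component satisfies $\psi\circ A=\lambda\psi+c$, which forces $\psi$ to be constant or $\lambda=1$. In the latter case $\psi\circ A-\psi=c$ integrates to $c=0$, and ergodicity again makes $\psi$ constant. So $J$ is constant in the right-translate trivialization, i.e.\ $J$ is left-invariant.

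The main obstacle is the rigorous version of the spectral step: showing that twisted eigenfunctions live on the discrete-spectrum factor, and that this factor is finite-dimensional for ergodic affine $A$ on a general $\Gamma\backslash G$. There I would invoke the structure results cited from \cite{dwwx25} rather than reprove them.
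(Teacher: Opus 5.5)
Your proposal is correct and follows essentially the paper's route: the $DA$-equivariance of $h_*J_X$, then reducing its components to Koopman eigenfunctions whose eigenvalues are among the finitely many unimodular eigenvalues of $C\mapsto DA\,C\,DA^{-1}$, then elliptic regularity of $\bar\partial_J$ in holomorphic charts of $X$, and finally weak mixing forcing those eigenfunctions to be constant. For the spectral input, what you actually need is that these eigenspaces are one-dimensional by ergodicity and smooth by \cite[Theorem~20]{dwwx25}, which is what the paper cites; your character-orbit heuristic is not accurate as stated (for an irrational rotation every character is an eigenfunction). The step you leave as an expectation is immediate and needs no Jacobian theorem for pseudo-holomorphic homeomorphisms: once $h^{-1}$ is $C^1$, the local lower bound $d(h^{-1}p,h^{-1}q)\ge c\,d(p,q)$ gives $|D(h^{-1})_p v|\ge c|v|$ at every point $p$, which is how the paper argues.
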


In particular, by taking \(h=\operatorname{id}_{\Gamma\backslash G}\), we obtain the following corollary. \begin{coralph}\label{coro 1}
Let \(G\) be a simply connected real Lie group, \(\Gamma<G\) be a cocompact lattice, and $m$ be the Haar measure on $\Gamma\backslash G$. For any affine automorphism \(A\colon \Gamma\backslash G\to \Gamma\backslash G\), let $\mathcal{J}_{\Gamma\backslash G}(A)$ denote the set of all $A$-invariant complex structures on $\Gamma\backslash G$.
\begin{itemize}
    \item If $A$ is ergodic with respect to $m$, then $\mathcal{J}_{\Gamma\backslash G}(A)$ is contained in a finite-dimensional subspace $\mathcal E_A\subset C^\infty(\Gamma\backslash G,\operatorname{End}(\mathfrak g))$ depending only on \(A\);
    \item If $A$ is weakly mixing with respect to $m$, then $\mathcal{J}_{\Gamma\backslash G}(A)$ consists of left-invariant complex structures. 
\end{itemize}
\end{coralph}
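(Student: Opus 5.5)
The statement immediately above is Corollary~\ref{coro 1}. We plan to deduce it directly from Theorem~\ref{theorem main} by taking \(X=\Gamma\backslash G\) and \(h=\operatorname{id}_{\Gamma\backslash G}\). Concretely, fix an \(A\)-invariant complex structure \(J\in\mathcal{J}_{\Gamma\backslash G}(A)\). Let \(X\) be the compact complex manifold \((\Gamma\backslash G,J)\), and put \(f=A\). Then \(f\) is a holomorphic diffeomorphism of \(X\), since \(DA\circ J=J\circ DA\) is exactly the statement that \(A\) is \(J\)-holomorphic.

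Next we check the hypotheses of Theorem~\ref{theorem main}. Fix any Riemannian metric on \(\Gamma\backslash G\). The identity map \(h\colon X\to\Gamma\backslash G\) is then an isometry, hence bi-Lipschitz. The conjugacy relation \(h\circ f=A\circ h\) holds trivially because \(f=A\). So all assumptions are met, with the same \(G\), \(\Gamma\), \(m\) and \(A\).

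We then read off the conclusions. The push-forward structure is \(h_*J_X=J\) itself. We identify \(J\) with an element of \(C^\infty(\Gamma\backslash G,\operatorname{End}(\mathfrak g))\) via the left-invariant trivialization \(T(\Gamma\backslash G)\cong \Gamma\backslash G\times\mathfrak g\), which is the identification already used in the theorem.
\begin{itemize}
\item If \(A\) is ergodic with respect to \(m\), the first part of Theorem~\ref{theorem main} gives \(J\in\mathcal E_A\). The key point is that \(\mathcal E_A\) depends only on \(A\) and not on \(J\) or \(h\). Since \(J\) was an arbitrary element of \(\mathcal{J}_{\Gamma\backslash G}(A)\), this gives \(\mathcal{J}_{\Gamma\backslash G}(A)\subset\mathcal E_A\).
\item If \(A\) is weakly mixing, the second part of the theorem gives that \(J\) is left-invariant, which is the second bullet of the corollary.
\end{itemize}

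There is no real obstacle here. The only point needing care is that the finite-dimensional space \(\mathcal E_A\) is uniform, i.e. independent of the particular invariant structure \(J\). This uniformity is exactly what Theorem~\ref{theorem main} asserts, and it is what turns the pointwise conclusions into the stated containment for the whole set \(\mathcal{J}_{\Gamma\backslash G}(A)\).
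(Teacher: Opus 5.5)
Your proposal is correct and is exactly the paper's argument: the corollary follows from Theorem~\ref{theorem main} by taking $X=(\Gamma\backslash G,J)$, $f=A$ and $h=\operatorname{id}_{\Gamma\backslash G}$. You also correctly note that $\mathcal E_A$ depends only on $A$, which turns the conclusion for each $J$ into the containment $\mathcal{J}_{\Gamma\backslash G}(A)\subset\mathcal E_A$.
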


\begin{remark}
Corollary \ref{coro 1} is not an existence statement. We do not assume that \(\Gamma\backslash G\) admits a complex structure, nor that it admits an \(A\)-invariant one. In particular, \(\mathcal{J}_{\Gamma\backslash G}(A)\)
may be empty, in which case the conclusions are vacuous.
\end{remark}
Moreover, the weak mixing assumption in Theorem \ref{theorem main} and Corollary \ref{coro 1} is essential. This mirrors the centralizer rigidity theorem of \cite{dwwx25}: when $A$ is ergodic but not weakly mixing with respect to $m$, Walters constructed examples \cite{wal68} of elements $f\in \Z^{\infty}(A)$ which are not affine (see also \cite[Section~3]{dwwx25}). This phenomenon also has an analogue in our setting, as shown in Section \ref{sec walters}. 
\begin{mainexample}\label{theorem example}
In the setting of Corollary~\ref{coro 1}, there exists an example for which $A$ is ergodic but not weakly mixing with respect to $m$, and $\mathcal{J}_{\Gamma\backslash G}(A)$ contains a complex structure that is not left-invariant.
\end{mainexample}
\subsection{An application to holomorphic Anosov diffeomorphisms}
Theorem~\ref{theorem main} can be applied to study holomorphic Anosov diffeomorphisms. Recall that a diffeomorphism \(f\colon M\to M\) of a compact Riemannian manifold \(M\) is called \textit{Anosov} if the tangent bundle admits a continuous \(Df\)-invariant splitting
\[
TM=E^s\oplus E^u
\]
and there exists a positive integer \(k\) such that, for every \(x\in M\),
\[
\|Df^k|_{E^s(x)}\|<1
\quad\text{and}\quad
\|Df^{-k}|_{E^u(x)}\|<1.
\]
All known examples of Anosov diffeomorphisms are topologically conjugate to
affine automorphisms of infra-nilmanifolds, and a long-standing conjecture
going back to Anosov and Smale asserts that there are no other examples
\cite{sma67}.

In general, a topological conjugacy need not be \(C^1\), since the Lyapunov exponents at corresponding periodic points may fail to coincide. Moreover, even when a \(C^1\) conjugacy exists, it need not be \(C^\infty\) \cite[Theorem~6.3]{lla92}. Nevertheless, for Anosov diffeomorphisms on tori that are \(C^1\)-conjugate to their linear models, the conjugacy is indeed \(C^\infty\) on $\TT^2$ \cite{mm87,lla92} and $\TT^3$ \cite{ks09,gog17,dg24}. More recently, Kalinin--Sadovskaya--Wang \cite{ksw23,ksw24} proved that if a \(C^\infty\) Anosov diffeomorphism is \(C^1\)-conjugate to a very weakly irreducible hyperbolic toral automorphism, then the conjugacy is \(C^\infty\). For general nilmanifolds, although irreducibility conditions can be formulated to study Lyapunov spectrum rigidity \cite{dew21}, no analogous bootstrap result seems to be known to the author's knowledge.

As a corollary of Theorem~\ref{theorem main}, we get a holomorphic bootstrap result. 
\begin{coralph}\label{coro single}
Let \(f\colon X\to X\) be a holomorphic Anosov diffeomorphism of a compact complex manifold \(X\). Suppose that \(f\) is conjugate to an affine Anosov automorphism \(A\) of a real infra-nilmanifold \(N\) via a bi-Lipschitz homeomorphism \(h:X\to N\). Then \(N\) admits a complex infra-nilmanifold structure \(J_N\) such that
\[
h\colon X\to (N,J_N)
\]
is biholomorphic and \(A\colon (N,J_N)\to (N,J_N)\) is a holomorphic affine automorphism.
\end{coralph}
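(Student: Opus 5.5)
The plan is to reduce Corollary~\ref{coro single} to Theorem~\ref{theorem main}. The main issue is that \(N\) is an infra-nilmanifold, not a nilmanifold \(\Gamma\backslash G\). I would first pass to a finite cover. Write \(N=\Lambda\backslash G\) with \(G\) a simply connected nilpotent Lie group and \(\Lambda\) torsion-free, and let \(\Gamma=\Lambda\cap G\), which has finite index in \(\Lambda\). Choose a characteristic finite-index subgroup \(\Gamma'\subset\Gamma\), normal in \(\Lambda\). The induced map \(A_*\) on \(\pi_1\) then preserves \(\Gamma'\), so \(A\) lifts to an affine automorphism \(\tilde A\) of the nilmanifold \(\tilde N=\Gamma'\backslash G\). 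Passing to a finite power \(A^k\) if necessary, \(\tilde A\) is an affine Anosov automorphism whose linear part is hyperbolic.

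Next I would lift the dynamics on \(X\). The homeomorphism \(h\) identifies \(\pi_1(X)\cong\pi_1(N)=\Lambda\), so the finite-index subgroup \(\Gamma'\) determines a finite cover \(\tilde X\to X\). It inherits the pulled-back complex structure and is compact. Since \(h\) intertwines \(f_*\) and \(A_*\) up to inner automorphisms, and \(\Gamma'\) is characteristic in \(\Gamma\), the maps \(f\) and \(h\) lift. This gives a holomorphic \(\tilde f\) and a bi-Lipschitz \(\tilde h\colon\tilde X\to\tilde N\) with \(\tilde h\tilde f=\tilde A\tilde h\), after possibly composing \(\tilde f\) with a deck transformation; adjusting \(\tilde A\) by the corresponding deck translation keeps it affine.

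An affine Anosov automorphism of a nilmanifold is weakly mixing, indeed mixing, with respect to Haar measure. This is the classical criterion: the linear part of the induced map on the abelianization torus has no roots of unity as eigenvalues, and hyperbolicity rules them out. Theorem~\ref{theorem main} therefore applies. It shows that \(\tilde h\) is \(C^\infty\) and biholomorphic onto \((\tilde N,\tilde J)\), where \(\tilde J\) is left-invariant on \(G\), and that \(\tilde A\) is holomorphic.

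Finally, \(\tilde J\) must descend to \(N\). The deck group \(\Lambda/\Gamma'\) acts on \(\tilde X\) holomorphically, and \(\tilde h\) is equivariant. Hence the elements of \(\Lambda\), which are affine maps \(x\mapsto gx\phi\) of \(G\), preserve the left-invariant structure \(\tilde J\). So \(\tilde J\) descends to a structure \(J_N\) on \(N\) that is locally left-invariant, i.e.\ a complex infra-nilmanifold structure, and \(h\) and \(A\) are holomorphic for it. The statement is about \(A\) itself, but if we passed to a power \(A^k\), holomorphicity of \(A\) still follows from \(h\) being a biholomorphism and \(A=hfh^{-1}\).

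I expect the main obstacle to be the lifting bookkeeping. One must ensure that \(A\) (or \(A^k\)) and \(f\) lift compatibly to the same finite cover, so that the lifted conjugacy exists and the lifted map is genuinely affine on \(\Gamma'\backslash G\). Checking that the lifted map is weakly mixing is standard once its linear part is hyperbolic.
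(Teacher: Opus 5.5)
Your proposal is correct and follows essentially the same route as the paper. You pass to a finite nilmanifold cover to which \(A\) lifts as an affine Anosov (hence mixing) automorphism, lift \(f\) and \(h\) compatibly, apply Theorem~\ref{theorem main} to get a left-invariant structure upstairs, and descend it using the deck-equivariance of \(\tilde h\). Your characteristic-subgroup bookkeeping only spells out the paper's ``\(A\)-invariant finite cover.'' Passing to a power \(A^k\) is harmless but unnecessary, since the lifted map already has hyperbolic linear part; likewise, you can simply define \(\tilde f:=\tilde h^{-1}\tilde A\tilde h\) rather than adjusting by deck transformations.
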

\begin{remark}
The holomorphic setting leads to a somewhat different phenomenon: no irreducibility assumption is needed for the bootstrap. Moreover, the corollary applies to affine models on general infra-nilmanifolds, not only to toral models. 
\end{remark}

\subsection{An application to Anosov actions}
Theorem~\ref{theorem main} can also be applied to study holomorphic higher-rank
Anosov actions. Recall that an action
\[
\alpha\colon \mathbb Z^k\to \Diff^\infty(M),\quad k\geq 2,
\]
is called an \emph{Anosov action} if \(\alpha(\mathbf n)\) is an Anosov diffeomorphism for some \(\mathbf n\in \mathbb Z^k\). When \(M\) is homeomorphic to an infra-nilmanifold, the results of Franks \cite{fra70} and Manning \cite{man74}, together with the commutativity of the action and a result by Walters \cite{wal70}, imply that \(\alpha\) is topologically conjugate to an affine action
\[
\rho\colon \mathbb Z^k\to \mathrm{Aff}(N),
\]
where \(N\) is an infra-nilmanifold homeomorphic to \(M\). The action \(\rho\) is called the \emph{linearization} of \(\alpha\).

Hertz--Wang \cite{hw14} proved a global rigidity theorem for such actions: if \(M\) is homeomorphic to an infra-nilmanifold and the linearization \(\rho\) has no rank-one factor\footnote{This means that there
exists a subgroup \(\Sigma\subset \mathbb Z^k\) with
\(\Sigma\cong \mathbb Z^2\) such that \(\rho({\mathbf n})\) is ergodic for every
\(\mathbf n\in\Sigma\setminus\{0\}\). Other equivalent definitions can be found
in \cite[Section 2.5]{hw14}.}, then \(\alpha\) is \(C^\infty\)-conjugate to \(\rho\). In particular, \(M\) is diffeomorphic to an infra-nilmanifold.

Applying Theorem~\ref{theorem main}, we obtain the following holomorphic analogue of the Hertz--Wang theorem. 

\begin{coralph}\label{coro higher rank}
Let \(\alpha:\mathbb Z^k\curvearrowright X\) be a holomorphic Anosov action on a compact complex manifold \(X\). Suppose that \(X\) is homeomorphic to a real infra-nilmanifold \(N\), and that the linearization
\[
\rho\colon \mathbb Z^k\to \operatorname{Aff}(N)
\]
of \(\alpha\) has no rank-one factor. Then \(N\) admits a complex infra-nilmanifold structure \(J_N\). Moreover, the Franks--Manning conjugacy
\[
h\colon X\to (N,J_N)
\]
satisfying
\[
h\circ \alpha(\mathbf n)=\rho(\mathbf n)\circ h
\quad \forall\mathbf n\in\mathbb Z^k
\]
is biholomorphic. In particular, \(\rho\) is an action by holomorphic affine automorphisms of \((N,J_N)\).
\end{coralph}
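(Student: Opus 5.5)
The plan is to reduce Corollary~\ref{coro higher rank} to Theorem~\ref{theorem main}, applied to a single well-chosen element of the action.

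\emph{Step 1: the conjugacy is $C^\infty$.} By the Franks--Manning--Walters discussion before the statement, there is a homeomorphism $h\colon X\to N$ with $h\circ\alpha(\mathbf n)=\rho(\mathbf n)\circ h$ for all $\mathbf n$. Since $\rho$ has no rank-one factor, the Hertz--Wang theorem \cite{hw14} applies to the smooth action $\alpha$ on the underlying real manifold of $X$. It gives a $C^\infty$ conjugacy $h'$ to $\rho$. I will use the uniqueness of the Franks--Manning conjugacy for an Anosov element, possibly after adjusting within the homotopy class, to identify $h'=h$. Then $h$ is a $C^\infty$ diffeomorphism, and in particular bi-Lipschitz.

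\emph{Step 2: choose an element to which Theorem~\ref{theorem main} applies.} By the footnote definition there is $\Sigma\cong\ZZ^2$ with $\rho(\mathbf n)$ ergodic for every $\mathbf n\in\Sigma\setminus\{0\}$. For affine automorphisms of (infra-)nilmanifolds, ergodicity of every nonzero element of $\Sigma$ should give weak mixing. Indeed, for nilmanifold automorphisms ergodicity already implies weak mixing, since ergodicity means no roots of unity on the toral factor and then no eigenvalues of modulus one that are roots of unity; the usual Parry-type argument makes the point spectrum trivial. So I fix $\mathbf n_0\in\Sigma$ and set $A=\rho(\mathbf n_0)$, which is weakly mixing, and $f=\alpha(\mathbf n_0)$, which is holomorphic on $X$.

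\emph{Step 3: apply Theorem~\ref{theorem main}.} If $N=\Gamma\backslash G$ is a nilmanifold with $G$ simply connected nilpotent, Theorem~\ref{theorem main} applies directly. It shows that $J_N:=h_*J_X$ is left-invariant, that $h$ is biholomorphic, and that $A$ is holomorphic. For a genuine infra-nilmanifold $N=\Gamma\backslash G$ with $\Gamma\subset G\rtimes F$, I pass to the finite cover $\Gamma_0\backslash G$, where $\Gamma_0=\Gamma\cap G$, and lift $h$, $\alpha$, $\rho$ there, replacing $\mathbf n_0$ by a power so that the lifts commute with the deck group. Running the theorem on the cover produces a left-invariant structure that is invariant under the deck transformations, since they are holomorphic lifts of the identity. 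This structure descends to a complex infra-nilmanifold structure $J_N$ on $N$.

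\emph{Step 4: the remaining elements.} For any other $\mathbf n$ I write $\rho(\mathbf n)=h\circ\alpha(\mathbf n)\circ h^{-1}$. This is a composition of biholomorphisms, so it is holomorphic on $(N,J_N)$.

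The main obstacles I expect are the identification $h'=h$ in Step 1, and the lifting and descent bookkeeping for infra-nilmanifolds in Step 3, including weak mixing of the lift. The claim that ergodic implies weakly mixing in Step 2 is standard but needs citing.
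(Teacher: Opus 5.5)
The architecture matches the paper's proof. Hertz--Wang makes the conjugacy $C^\infty$, Theorem~\ref{theorem main} is applied to one element, the other elements are handled via $\rho(\mathbf n)=h\circ\alpha(\mathbf n)\circ h^{-1}$, and the infra-nilmanifold case is done by passing to a nilmanifold cover and descending the deck-invariant structure.

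\textbf{The gap is Step~2, the choice of element.} You take $\mathbf n_0\in\Sigma$ with $\rho(\mathbf n_0)$ ergodic and assert that it is weakly mixing. Your justification (Parry: no roots of unity on the toral factor) applies to nilmanifold \emph{automorphisms}. But $\rho(\mathbf n_0)$ is a priori only an \emph{affine} automorphism, and for affine maps ergodicity does not imply weak mixing. The paper's own Example~\ref{example:ergodic-not-enough} is an ergodic, non--weakly-mixing affine automorphism of a torus. Your stronger version, that every nonzero element of $\Sigma$ is ergodic, fails as well: the $\mathbb Z^2$-action $x\mapsto x+n\alpha+m\beta$ on $\mathbb T^1$, with $1,\alpha,\beta$ rationally independent, has every nonzero element ergodic and none weakly mixing. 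As written, you only verify the hypothesis of the first bullet of Theorem~\ref{theorem main}. That gives $h$ smooth and $J\in\mathcal E_A$, but not left-invariance, which is exactly what is needed for a complex infra-nilmanifold structure. The same problem is behind your own worry in Step~3 about weak mixing of the lift to the nilmanifold cover.

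\textbf{The fix is what the paper does.} The action is Anosov, so choose $\mathbf n_0$ with $\alpha(\mathbf n_0)$ Anosov.
\begin{enumerate}
\item Then $\rho(\mathbf n_0)$ is an affine Anosov map. It has a fixed point, so after conjugating by a translation it becomes an Anosov nilmanifold automorphism, which is mixing by the standard criterion.
\item Its lift to the finite nilmanifold cover is again affine Anosov, hence mixing. This removes the lifting issue.
\end{enumerate}
If you insist on an element of $\Sigma$, you still need the Anosov element. Normalize it to be an automorphism; the other $\rho(\mathbf n)$ commute with it, so they permute its finite fixed-point set. Hence a finite-index subgroup of $\mathbb Z^k$ fixes the base point and acts by automorphisms, and only then does Parry's criterion apply.

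In the paper's route, the no-rank-one-factor hypothesis is used only to invoke Hertz--Wang in Step~1, not to find a mixing element. Your identification of the Hertz--Wang conjugacy with the Franks--Manning one in Step~1 is not an issue; the paper simply takes the Hertz--Wang conjugacy as $h$.
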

\begin{remark}
Corollary \ref{coro higher rank} does not follow directly from Hertz--Wang \cite{hw14}, since, a priori, the underlying real infra-nilmanifold \(N\) may carry non-standard complex structures. Nevertheless, we use Hertz--Wang~\cite{hw14} to obtain the \(C^\infty\) conjugacy needed to apply Theorem~\ref{theorem main}.

For related works on holomorphic abelian actions on Kähler manifolds and projective varieties, 
see \cite{ds04,zha16}.
\end{remark}

\subsection{Further questions}

The relationship between Problem~\ref{problem 1} and Problem~\ref{problem 2} suggests that conjectures and theorems concerning centralizers and abelian actions should have holomorphic analogues. We give two examples here.

A guiding principle in the centralizer problem is that a large centralizer should force rigidity. One of the prominent conjectures reflecting this philosophy is the Katok--Spatzier conjecture.
\begin{conjecture}[Katok--Spatzier]
Let \(f\in \Diff^\infty(M)\) be an Anosov diffeomorphism. Suppose that the
\(C^\infty\)-centralizer \(\mathcal Z^\infty(f)\) is higher-rank. Then \(M\) is diffeomorphic to an infra-nilmanifold, and \(f\) is \(C^\infty\)-conjugate to an infra-nilmanifold automorphism.
\end{conjecture}

In view of the analogy between centralizers and invariant complex structures,
the natural holomorphic counterpart of the Katok--Spatzier conjecture is
Ghys' conjecture on holomorphic Anosov diffeomorphisms.

\begin{conjecture}[Ghys]
Let \(f\in \Diff^\infty(M)\) be an Anosov diffeomorphism. Suppose that \(\mathcal{J}_M(f)\neq\emptyset\), i.e. \(f\) preserves at least one complex structure. Then for every \(J\in \mathcal J_M(f)\), the complex manifold \((M,J)\) is biholomorphic to a complex infra-nilmanifold, and \(f\) is
biholomorphically conjugate to an infra-nilmanifold automorphism.
\end{conjecture}
On the other hand, trivial centralizers could be typical. A major problem in the study of centralizers is Smale's conjecture, which predicts that \(C^r\)-generically the centralizer is trivial, i.e.,  \(\mathcal Z^r(f)=\langle f\rangle\). This conjecture was proved in the \(C^1\)-topology by Bonatti--Crovisier--Wilkinson~\cite{bcw}. Motivated by the analogy above, one may ask for the following analogue of Smale's conjecture for invariant complex structures.
\begin{question}[An analogue of Smale's conjecture for invariant complex structures]
Let \(M\) be a closed smooth manifold admitting at least one complex structure. Does there exist a $C^1$-open and $C^\infty$-dense subset $ \mathcal U \subset \Diff^\infty(M)$ such that
\[
    \mathcal J_M(f)=\varnothing
\]
for every \(f\in \mathcal U\)? Equivalently, does every \(f\in \mathcal U\) fail to preserve any complex
structure on \(M\)?
\end{question}

\begin{remark}
As shown by Bonatti--Crovisier--Vago--Wilkinson \cite{bcvw}, the property
\(\mathcal Z^r(f)=\langle f\rangle\) is not \(C^1\)-open. Thus in the centralizer problem, one can at best expect such a property to hold generically, rather than on a \(C^1\)-open set.

By contrast, the question above has a straightforward answer in the \(C^1\)-open and \(C^1\)-dense setting. Indeed, by the \(C^1\) closing lemma \cite{pug67} and Franks' lemma \cite{fra71}, one can perturb a diffeomorphism so as to create a periodic point whose return map has a real simple eigenvalue. This is incompatible with the existence of an \(f\)-invariant complex structure.
\end{remark}

\subsection*{Acknowledgments.} The author is grateful to Danijela Damjanović for her encouragement and many helpful discussions, especially for explaining the work \cite{dwwx25} to the author. The author also thanks Ralf Spatzier, Amie Wilkinson, Chengyang Wu, Junyi Xie, and Disheng Xu for helpful discussions and valuable suggestions.

\section{Preliminaries }

\subsection{Almost complex structures and integrability}\label{sec complex structure}

Let \(M\) be a smooth real manifold. A smooth (continuous,
measurable) \textit{almost complex structure} on \(M\) is a smooth
(continuous, measurable) bundle endomorphism
\(J\colon TM\to TM\) such that
\[
J^2=-\operatorname{Id}.
\]
A manifold \(M\) endowed with a smooth almost complex structure \(J\) is called
an \textit{almost complex manifold}, and is denoted by \((M,J)\).

We say that a smooth almost complex structure \(J\) is \textit{integrable}
if it is induced by a complex manifold structure on \(M\), i.e., if there exists an atlas \(\{\phi_\alpha\colon U_\alpha\to \mathbb C^d\}\) of \(M\) whose transition maps are holomorphic and such that, for every \(\alpha\),
\[
D\phi_\alpha\circ J=i\circ D\phi_\alpha ,
\]
where \(i\) denotes the standard complex structure on \(\mathbb C^d\), i.e. multiplication by \(\sqrt{-1}\). In this case, $J$ is called the \textit{complex structure} of the complex manifold $M$. 

For later use, we record the following standard consequence of elliptic
regularity.

\begin{lemma}
\label{lemma weak CR regularity}
Let \((M,J)\) be a smooth \(2d\)-dimensional almost complex manifold. Let \(U\subset M\) be an open set, and let \(\phi\colon U\to \mathbb C^d\) be a Lipschitz map. Since \(\phi\) is differentiable almost everywhere by Rademacher's theorem, suppose that
\[
D\phi_y\circ J(y)=i\circ D\phi_y
\quad \text{for a.e. } y\in U,
\]
where \(i\) denotes the standard complex structure on \(\mathbb C^d\), i.e. multiplication by \(\sqrt{-1}\). Then \(\phi\) is smooth, and the above identity holds for every \(y\in U\).
\end{lemma}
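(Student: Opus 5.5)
We prove Lemma~\ref{lemma weak CR regularity} by treating the Cauchy--Riemann condition as a first-order elliptic system with smooth coefficients, solved weakly by \(\phi\), and then applying elliptic regularity.

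\textbf{Step 1: reduce to a local system.} The statement is local, so we fix a point of \(U\) and work in a coordinate chart \(x=(x_1,\dots,x_{2d})\) on a ball \(B\). There \(J\) is a smooth matrix-valued function \(J(x)\) with \(J^2=-\operatorname{Id}\). Write \(\phi=(\phi^1,\dots,\phi^d)\). The hypothesis says that for a.e.\ \(x\), the complex-linear map \(D\phi_x\) satisfies
\[
D\phi_x\bigl(J(x)v\bigr)=i\,D\phi_x(v)\quad\text{for all } v.
\]
Equivalently, each component \(u=\phi^k\) is annihilated by the \((0,1)\)-vector fields. Choose a smooth local frame \(\bar L_1,\dots,\bar L_d\) of \(T^{0,1}\), where \(\bar L_j=\tfrac12(X_j+iJX_j)\) for a smooth real frame \(\{X_j,JX_j\}\). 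Then \(\bar L_j u=0\) a.e. Since \(u\) is Lipschitz, it lies in \(W^{1,\infty}_{loc}\), and its a.e.\ derivative coincides with its distributional derivative. Hence \(\bar L_j u=0\) holds in the sense of distributions.

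\textbf{Step 2: ellipticity.} The operator
\[
P=\sum_j L_j\bar L_j,
\qquad\text{or equivalently the first-order system } u\mapsto(\bar L_1u,\dots,\bar L_du),
\]
is elliptic. Its principal symbol is \(\sigma(\xi)=\sum_j|\xi(\bar L_j)|^2\), up to a constant. This vanishes only if \(\xi\) kills both \(X_j\) and \(JX_j\) for all \(j\), that is, only if \(\xi=0\).

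We apply \(L_j\) distributionally to \(\bar L_j u=0\). Here \(L_j\) is a first-order operator with smooth coefficients, so this is legitimate for \(u\in W^{1,\infty}\). We obtain \(Pu=0\) as a distribution, where \(P\) is a second-order elliptic operator with smooth coefficients. Its lower-order terms arise from \(L_j\) acting on coefficients, but they are still smooth.

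\textbf{Step 3: regularity.} Elliptic regularity gives \(u\in C^\infty(B)\), because a distributional solution of \(Pu=0\) with smooth coefficients is smooth. We can use either interior \(H^s\) estimates with bootstrapping from \(u\in H^1_{loc}\), or hypoellipticity of elliptic operators with \(C^\infty\) coefficients. Once \(\phi\) is smooth, the identity \(D\phi\circ J=i\circ D\phi\) holds a.e. and both sides are continuous, so it holds everywhere on \(U\).

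\textbf{Main obstacle.} The delicate step is Step~1: justifying that the pointwise a.e.\ identity gives a distributional equation. Non-integrability of \(J\) is no obstacle, since we only need the \(d\) equations \(\bar L_j u=0\) for a single function.

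The identification of the distributional gradient with the a.e.\ gradient is standard for Lipschitz, hence \(W^{1,\infty}_{loc}\), functions. The remaining care is in the composition in Step~2. Writing \(\bar L_j=\sum_k a_{jk}\partial_k\), we have \(\bar L_j u=\sum_k a_{jk}\partial_k u\in L^\infty\). Its distributional derivative is well defined because multiplying a \(W^{1,\infty}\) function by smooth coefficients stays in \(W^{1,\infty}\). So the composed operator \(L_j\bar L_j\) acts on \(u\) as a genuine second-order operator in divergence form, and nothing needs to be smooth a priori.
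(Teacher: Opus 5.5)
Your proof is correct and follows essentially the same route as the paper. A Lipschitz \(\phi\) lies in \(W^{1,\infty}_{\mathrm{loc}}\), so the a.e.\ identity is the weak equation \(\bar\partial_J\phi=0\) (your \(\bar L_j\phi^k=0\)), and elliptic regularity for an operator with smooth coefficients then gives smoothness, with the identity extending everywhere by continuity. The only difference is that you compose with \(L_j\) to obtain the scalar second-order elliptic operator \(P=\sum_j L_j\bar L_j\). This lets you invoke hypoellipticity of a determined elliptic operator instead of regularity for the first-order system \(\bar\partial_J\), which is overdetermined when \(d\ge 2\). Note that this composition needs no justification beyond \(\bar L_j u\) being the zero distribution: it is only \(L^\infty\), not \(W^{1,\infty}\) as you wrote, but any distribution can be differentiated.
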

\begin{proof}
Since \(\phi\) is Lipschitz, we have \(\phi\in W^{1,\infty}_{\mathrm{loc}}(U,\mathbb C^d)\). Define
\[
\bar\partial_J\phi
=
\frac12\bigl(D\phi+i\,D\phi\circ J\bigr).
\]
Then the assumption is precisely
\[
\bar\partial_J\phi=0
\]
in the weak sense. In local coordinates, the operator \(\bar\partial_J\) is a first-order elliptic linear differential operator with smooth coefficients. It follows from the standard regularity theorem for elliptic operators (see, for example,\cite[Chapter~IV, Section~4]{wells08}) that \(\phi\) is smooth. The identity then holds everywhere by continuity.
\end{proof}

\subsection{Left-invariant complex structure}
\label{section complex nilmanifold}
Let \(G\) be a connected real Lie group and let \(\mathfrak g\) be its Lie algebra. Any linear map \(J\in \operatorname{End}(\mathfrak g)\) with \(J^2=-\operatorname{id}\) defines a left-invariant almost complex structure on
\(G\) by left translations. By the Newlander--Nirenberg theorem \cite{nn57},
this almost complex structure is integrable if and only if its Nijenhuis tensor
\[
N_J(X,Y):=[JX,JY]-[X,Y]-J[JX,Y]-J[X,JY]
\]
vanishes for all \(X,Y\in \mathfrak g\). In this case, we say that \(J\) is a
left-invariant complex structure on \(G\). Moreover, if \(\Gamma\) is a
cocompact lattice in \(G\), then \(J\) descends to a left-invariant complex structure \(J_M\)
on \(M=\Gamma\backslash G\). 

In particular, if $M$ is a nilmanifold endowed with a left-invariant complex structure \(J_M\), then \((M,J_M)\) is called a \textit{complex nilmanifold}. If a finite group acts freely on \((M,J_M)\) by holomorphic affine automorphisms, then the quotient is called a \textit{complex infra-nilmanifold}. 


\section{Proofs}
The key idea in the proof of Theorem \ref{theorem main} is to view the pushed-forward complex structure \(h_*J_X\) as a bounded measurable map with values in \(\operatorname{End}_{\mathbb R}(\mathfrak g)\). The invariance of the complex structure then becomes a linear equivariance relation over the affine automorphism. We first record some elementary rigidity results for such equivariant maps.

\subsection{Equivariant tensor fields over affine automorphisms}

The argument in this subsection is essentially linear algebraic, and is inspired by the proof of \cite[Theorem B]{dwwx25}, where the authors prove the corresponding statement for derivative cocycles. Here we record a slightly more general version for tensor-valued equivariant maps. Although our application below concerns complex structures, the lemmas may also be useful for other tensor fields whose transformation laws are induced by \(DA\).

Let \(G\) be a real connected Lie group, and let \(m\) be the Haar measure on \(\Gamma\backslash G\), where \(\Gamma<G\) is a cocompact lattice. Let \(A\colon \Gamma\backslash G\to \Gamma\backslash G\) be an affine automorphism, and let \(DA\colon \mathfrak g\to \mathfrak g\) denote the Lie algebra automorphism induced by its linear part. Consider a bounded measurable map
\[
B\colon \Gamma\backslash G\to \operatorname{End}_{\mathbb R}(\mathfrak g)
\]
satisfying
\begin{equation}\label{eq:invariant}
    B(Ax)=DA\circ B(x)\circ (DA)^{-1}  \quad\text{for } m \text{-a.e. }x \in \Gamma\backslash G.
\end{equation}

The next two lemmas describe the rigidity of such maps under ergodicity and
weak mixing assumptions on \(A\).
\begin{lemma}\label{lemma cocycle ergodic}
If \(A\) is \(m\)-ergodic, then there exists a finite-dimensional subspace
\[
\mathcal E_A\subset C^\infty(\Gamma\backslash G,\operatorname{End}_{\mathbb R}(\mathfrak g)),
\]
depending only on \(A\), such that every \(B\) satisfying \eqref{eq:invariant} agrees almost everywhere with an element of \(\mathcal E_A\).
\end{lemma}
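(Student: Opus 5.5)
The plan is to treat \eqref{eq:invariant} as an eigenvalue problem for a unitary Koopman-type operator, and to show that only finitely many eigenvalues can occur and that each eigenspace is finite dimensional. Let $U_A\colon L^2(\Gamma\backslash G,m)\to L^2(\Gamma\backslash G,m)$ be the Koopman operator $U_A\phi=\phi\circ A$. It is unitary because $A$ preserves $m$. Let $L=\mathrm{Ad}_{DA}$ act on the finite-dimensional space $V=\operatorname{End}_{\mathbb R}(\mathfrak g)$ by $L(T)=DA\circ T\circ DA^{-1}$, and complexify $V$. Relation \eqref{eq:invariant} says that $B\in L^2\otimes V$ satisfies $(U_A\otimes \mathrm{id})B=(\mathrm{id}\otimes L)B$.

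First, put $L$ in Jordan form. For each generalized eigenvalue $\mu$ of $L$, project $B$ onto the corresponding generalized eigenspace $V_\mu$; the projected component $B_\mu$ still satisfies the equation. Pick a coordinate in the top level of a Jordan chain. That coordinate function $\phi\in L^2$ satisfies $\phi\circ A=\mu\phi$ modulo lower-order terms, and an induction down the chain handles the rest.

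Second, since $U_A$ is unitary, an $L^2$ eigenfunction with eigenvalue $\mu$ forces $|\mu|=1$ or $\phi=0$. The nilpotent part also forces vanishing: if $\phi\circ A=\mu\phi+\psi$ with $\psi\circ A=\mu\psi$, then iterating gives $\phi\circ A^n=\mu^n\phi+n\mu^{n-1}\psi$. Boundedness of the norms forces $\psi=0$. Hence $B$ lies in the span of $\phi_j\otimes v_j$, where each $v_j$ is an honest eigenvector of $L$ with eigenvalue $\mu_j$ satisfying $|\mu_j|=1$, and $\phi_j$ is an $L^2$ eigenfunction of $U_A$ with that eigenvalue.

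Third, by ergodicity each eigenspace of $U_A$ is one dimensional: if $\phi,\phi'$ share the eigenvalue $\mu\neq 0$, then $|\phi|$ is invariant, hence constant, and $\phi'/\phi$ is invariant, hence constant. Since $L$ has at most $(\dim\mathfrak g)^2$ eigenvalues, we may define
\[
\mathcal E_A=\operatorname{span}\{\phi_\mu\otimes v:\ \mu\in\operatorname{spec}(L)\cap\operatorname{spec}_{pt}(U_A),\ v\in\ker(L-\mu)\},
\]
with real parts taken. This space has dimension at most $(\dim\mathfrak g)^2$ and depends only on $A$.

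The main obstacle is smoothness: showing that the $L^2$ eigenfunctions $\phi_\mu$ of an ergodic affine automorphism have $C^\infty$ representatives. For this I would use the structure theory of eigenfunctions of affine maps. An eigenvalue $\mu$ of $U_A$ that is also an eigenvalue of $\mathrm{Ad}_{DA}$ is a root of unity times a modulus-one algebraic number. I would reduce to the maximal factor of $A$ with discrete spectrum, which is rotation on a compact abelian group. Following the argument behind \cite[Theorem~B]{dwwx25}, eigenfunctions then factor through a torus quotient $\Gamma\backslash G\to \mathbb T^r$ that is compatible with $A$, on which $A$ acts as an affine ergodic map. On this torus, eigenfunctions of ergodic affine toral maps are characters: a Fourier series argument shows that any nonzero Fourier coefficient must be supported on a finite $DA^{*}$-orbit. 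Characters are smooth, which yields $\mathcal E_A\subset C^\infty$.

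If this factor reduction is hard to carry out in general, I would instead apply the argument of \cite{dwwx25} directly, since the lemma was stated to be inspired by it. In that argument, eigenfunctions of $U_A$ with eigenvalues among the finitely many eigenvalues of $\mathrm{Ad}_{DA}$ are shown to be smooth by decomposing $L^2$ into $G$-isotypic pieces. In each piece, $A$ acting on the finitely many relevant representations forces the eigenfunctions to be $K$-finite, hence smooth.
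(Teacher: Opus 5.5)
Your argument is essentially the paper's: complexify, split $\mathrm{Ad}_{DA}$ into generalized eigenspaces, kill the $|\mu|\neq 1$ components and the nilpotent parts, reduce to one-dimensional eigenspaces of $U_A$ via ergodicity, and take real parts. You kill those components via unitarity of the Koopman operator on $L^2$, whereas the paper uses pointwise boundedness of $T^nB_{\mathbb C,\rho}(x)$ along orbits. For smoothness of the eigenfunctions the paper simply cites \cite[Theorem~20]{dwwx25}, which is precisely the black box behind your torus-factor sketch, so cite it directly rather than relying on the unsubstantiated $K$-finiteness alternative.
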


\begin{proof}
We first complexify \eqref{eq:invariant}. Denote $V=\operatorname{End}_{\mathbb C}(\mathfrak g_{\mathbb C})$, and define a linear transformation
\[
T\colon V\to V,
\qquad
T(C)=DA_{\mathbb C}\circ C\circ (DA_{\mathbb C})^{-1}.
\]
Then equation \eqref{eq:invariant} becomes
\[
B_{\mathbb C}\circ A=T( B_{\mathbb C}).
\]
Decompose \(V\) into the direct sum of the generalized eigenspaces of \(T\):
\[
V=\bigoplus_{\rho\in \operatorname{Spec}(T)}V_\rho.
\]
Let \(P_\rho:V\to V_\rho\) be the spectral projection and set \(B_{\mathbb C,\rho}=P_\rho\circ B_{\mathbb C}\). Since \(P_\rho\) commutes with \(T\), applying \(P_\rho\) to \(B_{\mathbb C}\circ A=T(B_{\mathbb C})\) gives
\[
B_{\mathbb C,\rho}\circ A=T( B_{\mathbb C,\rho}),
\]
and hence
\[
B_{\mathbb C,\rho}(A^n x)=T^nB_{\mathbb C,\rho}(x)
\]
for every \(n\geq 0\) and for almost every \(x\). More precisely, after replacing the original full-measure set by the intersection of its inverse images under all iterates of \(A\), we may assume that this relation holds along the entire forward orbit of \(x\).

If \(|\rho|>1\), then \(T^n v\) is unbounded for every nonzero \(v\in V_\rho\). Since \(B_{\mathbb C,\rho}\) is essentially bounded, it follows that \(B_{\mathbb C,\rho}=0\) almost everywhere. For \(|\rho|<1\), applying the same argument to \(A^{-1}\) gives the same conclusion. Thus only eigenvalues \(\rho\) with \(|\rho|=1\) may occur.

Now assume \(|\rho|=1\). Write
\[
T|_{V_\rho}=\rho(I+N_\rho),
\]
where \(N_\rho\) is nilpotent. For almost every \(x\), the sequence
\[
T^nB_{\mathbb C,\rho}(x)
=
\rho^n(I+N_\rho)^nB_{\mathbb C,\rho}(x)
\]
is bounded. Hence \(N_\rho B_{\mathbb C,\rho}(x)=0\) almost everywhere; otherwise the right-hand side would have polynomial growth. Therefore
\[
B_{\mathbb C,\rho}(x)\in \ker(T-\rho I)
\]
for almost every \(x\). 

Choose a basis \(C_{\rho,1},\ldots,C_{\rho,d_\rho}\) of \(\ker(T-\rho I)\). Then
\[
B_{\mathbb C,\rho}(x)
=
\sum_{j=1}^{d_\rho}\varphi_{\rho,j}(x)C_{\rho,j}
\]
for some bounded measurable functions \(\varphi_{\rho,j}\) with
\[
\varphi_{\rho,j}\circ A=\rho\varphi_{\rho,j}.
\]
Thus \(\varphi_{\rho,j}\in \mathcal K_\rho\), where
\[
\mathcal K_\rho
=
\{\varphi\in L^\infty(\Gamma\backslash G,m):\varphi\circ A=\rho\varphi\}.
\]
By \cite[Theorem~20]{dwwx25}, for an ergodic affine automorphism \(A\), every nonzero \(L^2\)-eigenspace
\[
E_\rho=\{\varphi\in L^2(\Gamma\backslash G,m):\varphi\circ A=\rho\varphi\}
\]
is one-dimensional and is spanned by a smooth eigenfunction. Since \(\mathcal K_\rho\subset E_\rho\), every nonzero \(\mathcal K_\rho\) is also spanned by a smooth eigenfunction. Consequently \(B_{\mathbb C}\) agrees almost
everywhere with an element of the finite-dimensional space
\[
\mathcal E_{\mathbb C,A}
=
\bigoplus_{\substack{\rho\in\operatorname{Spec}(T)\\ |\rho|=1}}
E_\rho\otimes \ker(T-\rho I)
\subset
C^\infty(\Gamma\backslash G,\operatorname{End}_{\mathbb C}(\mathfrak g_{\mathbb C})),
\]
which depends only on \(A\).

Let \(F\in\mathcal E_{\mathbb C,A}\) be such that \(F=B_{\mathbb C}\) almost everywhere. Since \(B_{\mathbb C}\) is the complexification of a real tensor, \(F\) preserves the real form \(\mathfrak g\subset\mathfrak g_{\mathbb C}\) almost everywhere. By smoothness, this holds everywhere. Hence $F=F_{0,\mathbb C}$ is the complexification for a unique real tensor
\[
F_0\in C^\infty(\Gamma\backslash G,\operatorname{End}_{\mathbb R}(\mathfrak g)),
\]
and \(B=F_0\) almost everywhere. Then
\[
\mathcal E_A
=
\{F_0: F_{0,\mathbb C}\in \mathcal E_{\mathbb C,A}\}
\]
is the finite-dimensional space we require. 
\end{proof}
\begin{lemma}\label{lemma cocycle mixing}
If \(A\) is weakly mixing with respect to \(m\), then every \(B\) satisfying \eqref{eq:invariant} is equal to a constant $B_0 \in \operatorname{End}_{\mathbb R}(\mathfrak g)$ almost everywhere. Moreover, $B_0$ satisfies
\[DA\circ B_0=B_0\circ DA.\]
\end{lemma}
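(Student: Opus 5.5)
We follow the proof of Lemma~\ref{lemma cocycle ergodic} step by step. The only new input is that weak mixing excludes every nontrivial eigenvalue. Since a weakly mixing \(A\) is ergodic, all the reductions in that proof remain valid. Complexify \eqref{eq:invariant} to \(B_{\mathbb C}\circ A=T(B_{\mathbb C})\) on \(V=\operatorname{End}_{\mathbb C}(\mathfrak g_{\mathbb C})\) and split \(V\) into the generalized eigenspaces \(V_\rho\) of \(T\). Exactly as before, the components \(B_{\mathbb C,\rho}\) with \(|\rho|\neq 1\) vanish almost everywhere, since \(B\) is essentially bounded. For \(|\rho|=1\) the nilpotent part kills \(B_{\mathbb C,\rho}\), because otherwise the sequence would grow polynomially. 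Hence, in a basis \(C_{\rho,j}\) of \(\ker(T-\rho I)\),
\[
B_{\mathbb C,\rho}(x)=\sum_{j}\varphi_{\rho,j}(x)\,C_{\rho,j},\qquad \varphi_{\rho,j}\circ A=\rho\,\varphi_{\rho,j},\quad \varphi_{\rho,j}\in L^\infty(\Gamma\backslash G,m).
\]

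Now use weak mixing. It is equivalent to the statement that the Koopman operator \(\varphi\mapsto\varphi\circ A\) on \(L^2(m)\) has no eigenfunctions other than constants; in particular it has no eigenvalue \(\rho\neq 1\), and the eigenvalue \(1\) has only the constants as eigenfunctions, by ergodicity. Since \(\varphi_{\rho,j}\in L^\infty\subset L^2\) (the measure \(m\) is finite), we get \(\varphi_{\rho,j}=0\) almost everywhere for \(\rho\neq1\), and \(\varphi_{1,j}\) is almost everywhere equal to a constant \(c_j\). Therefore \(B_{\mathbb C}\) equals the constant \(\sum_j c_j C_{1,j}\in\ker(T-I)\) almost everywhere. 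If \(1\notin\operatorname{Spec}(T)\), this constant is simply \(0\).

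It remains to return to real endomorphisms and to check the commutation relation. The constant \(B_{0,\mathbb C}:=\sum_j c_jC_{1,j}\) agrees almost everywhere with the complexification of the real map \(B\). So it preserves the real form \(\mathfrak g\), and it is therefore the complexification of a unique \(B_0\in\operatorname{End}_{\mathbb R}(\mathfrak g)\) with \(B=B_0\) almost everywhere. Since \(B_{0,\mathbb C}\in\ker(T-I)\), we have \(DA_{\mathbb C}\circ B_{0,\mathbb C}\circ (DA_{\mathbb C})^{-1}=B_{0,\mathbb C}\), and restricting to \(\mathfrak g\) gives \(DA\circ B_0=B_0\circ DA\). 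This relation can also be read off directly: substitute the constant \(B_0\) into \eqref{eq:invariant} at a single point of the full-measure set.

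We do not expect a serious obstacle. The only point needing care is the spectral characterization of weak mixing, namely that there are no nonconstant \(L^2\) eigenfunctions, including those with eigenvalues \(\rho\neq1\) on the unit circle. This is standard and replaces the appeal to \cite[Theorem~20]{dwwx25} made in the ergodic case. One must also make sure that the coefficient functions \(\varphi_{\rho,j}\) are genuinely measurable and bounded, which follows because they are obtained from \(B_{\mathbb C,\rho}\) by a fixed linear coordinate map.
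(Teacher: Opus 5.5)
Your proposal is correct and takes essentially the same approach as the paper. Both reuse the reductions of Lemma~\ref{lemma cocycle ergodic} to get unimodular eigen-components, kill those with \(\rho\neq 1\) by weak mixing, and conclude constancy by ergodicity; incidentally, your case \(1\notin\operatorname{Spec}(T)\) never occurs, since \(T(\mathrm{Id})=\mathrm{Id}\). The only difference is that you quote the spectral characterization of weak mixing as a black box, whereas the paper proves the needed instance inline from the Ces\`aro-averaged correlation definition, using \(\int\varphi\,dm=0\) and \(\int\varphi\circ A^n\,\overline{\varphi}\,dm=\rho^n\|\varphi\|_2^2\) for \(\varphi=\ell(B_{\mathbb C,\rho})\).
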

\begin{proof}
Since weak mixing implies ergodicity, the proof of Lemma~\ref{lemma cocycle ergodic} also applies in this setting. In particular, for every generalized eigenspace \(V_\rho\) of \(T\), the component \(B_{\mathbb C,\rho}=P_\rho\circ B_{\mathbb C}\) satisfies the following properties:
\begin{itemize}
    \item \(B_{\mathbb C,\rho}(x)=0\) for almost every $x $ whenever \(|\rho|\neq 1\). 
    \item $B_{\mathbb C,\rho}(Ax)=\rho B_{\mathbb C,\rho}(x)$ for almost every $x$ when \(|\rho|=1\). 
\end{itemize}
We then show that \(B_{\mathbb C,\rho}=0\) for every \(\rho\neq 1\). For any given $\rho \neq 1$, let \(\ell\in V_\rho^\ast\) be a linear functional on \(V_\rho\), and define
\[
\varphi(x):=\ell(B_{\mathbb C,\rho}(x)).
\]
Then \(\varphi:\Gamma\backslash G \to \CC\) is a bounded measurable function with
\begin{equation}\label{eq varphi}
\varphi(Ax)
=
\ell(B_{\mathbb C,\rho}(Ax))
=
\ell(\rho B_{\mathbb C,\rho}(x))
=
\rho\varphi(x).
\end{equation}
Since \(A\) is weakly mixing with respect to \(m\), we have
\begin{equation}\label{eq weakly mixing}
    \frac1N\sum_{n=0}^{N-1}
\left|
\int_{\Gamma\backslash G} \varphi(A^n x)\overline{\varphi(x)}\,dm
-
\left(\int_{\Gamma\backslash G} \varphi\,dm\right)
\overline{\left(\int_{\Gamma\backslash G} \varphi\,dm\right)}
\right|
\longrightarrow 0 .
\end{equation}
Since \(m\) is \(A\)-invariant, integrating equation \eqref{eq varphi} gives
\[
\int_{\Gamma\backslash G} \varphi\,dm
=
\rho \int_{\Gamma\backslash G} \varphi\,dm.
\]
As \(\rho\neq 1\), it follows that
\begin{equation}\label{eq term2}
    \int_{\Gamma\backslash G} \varphi\,dm=0.
\end{equation}
On the other hand, equation \eqref{eq varphi} also implies, for every \(n\geq 0\),
\begin{equation}\label{eq term1}
\int_{\Gamma\backslash G} \varphi(A^n x)\overline{\varphi(x)}\,dm
=
\rho^n\int_{\Gamma\backslash G} |\varphi(x)|^2\,dm .
\end{equation}
Since $|\rho|=1$, substituting \eqref{eq term2} and \eqref{eq term1} into \eqref{eq weakly mixing} gives
\[
\int_{\Gamma\backslash G} |\varphi(x)|^2\,dm=0.
\]
Thus \(\varphi(x)=0\) for almost every \(x\). Since this holds for every \(\ell\in V_\rho^\ast\) and $\rho \neq 1$, we conclude that
\[
B_{\mathbb C,\rho}(x)=0
\]
for every $\rho \neq 1$ and almost every $x$. Therefore, 
\[
B_{\mathbb C}(Ax)=B_{\mathbb C}(x)
\]
is $A$-invariant. Since \(A\) is weakly mixing, it is ergodic. Hence \(B_{\mathbb C}\) is constant almost everywhere, and \(B\) is also almost everywhere equal to a real constant \(B_0\). Substituting this constant into \eqref{eq:invariant} gives \(DA\circ B_0=B_0\circ DA\).
\end{proof}

\subsection{A lemma on push-forward complex structure}
We prove a bootstrap lemma concerning the push-forward complex structure. Let \((X,J_X)\) be a complex manifold and let \(N\) be a smooth real manifold. If \(h\colon X\to N\) is a bi-Lipschitz homeomorphism, then both \(h\) and \(h^{-1}\) are differentiable almost everywhere. At points where both differentials exist, the chain rule gives \(D(h^{-1})_{h(x)}=(Dh_x)^{-1}\).  Thus 
\[
(h_*J_X)(y)
=
Dh_x\circ J_X(x)\circ (Dh_x)^{-1},
\qquad y=h(x),
\]
defines an \(L^\infty\) almost complex structure on \(N\). The following lemma
says that if the measurable push-forward \(h_\ast J_X\) agrees almost everywhere with a smooth almost complex structure, then that smooth almost complex structure is integrable.
\begin{lemma}\label{lemma complex structure}
If there exists a smooth almost complex structure \(J_N\) on \(N\) such that
\[
J_N=h_*J_X
\quad \text{for a.e. }y \in N
\]
then \(J_N\) is integrable and
\[
h\colon (X,J_X)\to (N,J_N)
\]
is biholomorphic. In particular, the homeomorphism $h$ is $C^\infty$ and $J_N=h_*J_X$ for every $y \in N$. 
\end{lemma}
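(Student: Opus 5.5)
The plan is to use Lemma~\ref{lemma weak CR regularity}, applied to the inverse map $h^{-1}$ composed with holomorphic charts of $X$. Fix a point $y_0\in N$ and set $x_0=h^{-1}(y_0)$. Choose a holomorphic chart $\phi\colon U\to\mathbb C^d$ of $X$ around $x_0$, so that $D\phi\circ J_X=i\circ D\phi$ on $U$, and $\phi$ is a smooth diffeomorphism onto its image. Put $V=h(U)$, an open neighbourhood of $y_0$, and consider
\[
\psi:=\phi\circ h^{-1}\colon V\to\mathbb C^d .
\]
This map is Lipschitz, because $h^{-1}$ is Lipschitz and $\phi$ is smooth (shrink $U$ to have compact closure inside the chart domain).

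Next I verify the weak Cauchy--Riemann equation for $\psi$ with respect to the smooth structure $J_N$. Let $Z\subset V$ be the full-measure set of points $y$ at which $h^{-1}$ is differentiable at $y$, $h$ is differentiable at $x=h^{-1}(y)$, and $J_N(y)=(h_*J_X)(y)$. One small point is that the set of $y$ for which $h$ is differentiable at $h^{-1}(y)$ has full measure. This holds because $h$ is bi-Lipschitz, so $h$ maps null sets to null sets. At $y\in Z$ we have $D(h^{-1})_y=(Dh_x)^{-1}$, and therefore
\[
D\psi_y\circ J_N(y)=D\phi_x\circ (Dh_x)^{-1}\circ Dh_x\circ J_X(x)\circ(Dh_x)^{-1}=i\circ D\phi_x\circ (Dh_x)^{-1}=i\circ D\psi_y .
\]
Lemma~\ref{lemma weak CR regularity} then shows that $\psi$ is smooth and satisfies $D\psi\circ J_N=i\circ D\psi$ everywhere on $V$.

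The main obstacle is showing that $\psi$ is a genuine chart, that is, a local diffeomorphism; smoothness alone gives no control of the rank. Since $\psi$ is a bi-Lipschitz homeomorphism onto its image, for a.e. $y$ the differential $D\psi_y$ is invertible, with $\|D\psi_y^{-1}\|$ bounded by the co-Lipschitz constant. Where $D\psi$ exists classically, the bi-Lipschitz lower bound $|\psi(y+tv)-\psi(y)|\ge c|tv|$ gives $|D\psi_y v|\ge c|v|$ directly. Because $\psi$ is smooth, this holds at every point, so $D\psi_y$ is injective everywhere. By the inverse function theorem $\psi$ is then a smooth diffeomorphism onto an open set, and it is $J_N$-holomorphic.

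The collection $\{(h(U),\phi\circ h^{-1})\}$ therefore forms an atlas of $N$ satisfying $D\psi\circ J_N=i\circ D\psi$. Its transition maps are $\phi_\beta\circ\phi_\alpha^{-1}$, which are holomorphic, so $J_N$ is integrable. In these charts $h$ is expressed as the identity $\phi\circ h^{-1}\circ h\circ\phi^{-1}$. Hence $h$ is a biholomorphism and in particular $C^\infty$, with $Dh\circ J_X=J_N\circ Dh$ everywhere, which is exactly $J_N=h_*J_X$ at every point.
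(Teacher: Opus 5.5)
Your proposal is correct and follows essentially the same route as the paper: the pushed-forward charts $\psi=\phi\circ h^{-1}$, the a.e.\ Cauchy--Riemann identity fed into Lemma~\ref{lemma weak CR regularity}, the bi-Lipschitz lower bound combined with smoothness to get injectivity of $D\psi$ and then the inverse function theorem, and finally $h$ being the identity in these charts. Your explicit check that the relevant differentiability set has full measure (because a Lipschitz map sends null sets to null sets) is a detail the paper leaves implicit.
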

\begin{proof}
Let \(\phi_\alpha:U_\alpha\to\mathbb C^d\) be holomorphic coordinates for \((X,J_X)\). Set
\[
V_\alpha=h(U_\alpha),
\qquad
\psi_\alpha:=\phi_\alpha\circ h^{-1}:V_\alpha\to\mathbb C^d .
\]
The maps \(\psi_\alpha\) are bi-Lipschitz and the transition maps
\[
\psi_\alpha\circ\psi_\beta^{-1}
=
\phi_\alpha\circ h^{-1}\circ h\circ\phi_\beta^{-1}
=
\phi_\alpha\circ\phi_\beta^{-1},
\]
are holomorphic. 

Since  \(h_*J_X\) agrees almost everywhere with  \(J_N\), for almost every \(y=h(x)\), we have
\[
D\psi_\alpha|_y\circ J_N(y)
=
D\psi_\alpha|_y\circ (h_*J_X)(y).
\]
By the definition of \(h_*J_X\), this gives
\[
D\psi_\alpha|_y\circ J_N(y)
=
D\phi_\alpha|_x\circ (Dh_x)^{-1}
\circ Dh_x\circ J_X(x)\circ (Dh_x)^{-1}.
\]
Thus
\[
D\psi_\alpha|_y\circ J_N(y)
=
D\phi_\alpha|_x\circ J_X(x)\circ (Dh_x)^{-1}.
\]
Since \(\phi_\alpha\) is \(J_X\)-holomorphic,
\[
D\phi_\alpha|_x\circ J_X(x)
=
i\circ D\phi_\alpha|_x .
\]
Therefore, 
\[
D\psi_\alpha|_y\circ J_N(y)
=
i\circ D\phi_\alpha|_x\circ(Dh_x)^{-1}
=
i\circ D\psi_\alpha|_y
\quad \text{for a.e. }y\in V_\alpha .
\]
By Lemma~\ref{lemma weak CR regularity}, each \(\psi_\alpha\) is smooth and the above identity holds everywhere.  Since \(\psi_\alpha\) is bi-Lipschitz, there is \(c>0\) such that \(|\psi_\alpha(p)-\psi_\alpha(q)|\ge c\,d(p,q)\) locally. Because \(\psi_\alpha\) is now smooth, this implies that \(D\psi_\alpha\) is injective at every point. By dimension reasons \(D\psi_\alpha\) is invertible, and the inverse function theorem shows that the \(\psi_\alpha\) are smooth coordinate charts. Hence the charts \(\psi_\alpha\) are smooth \(J_N\)-holomorphic coordinates, and \(J_N\) is integrable.

Finally, by construction of the pushed-forward charts \(\psi_\alpha=\phi_\alpha\circ h^{-1}\), the coordinate representation of \(h\) in the charts \(\phi_\alpha\) on \(X\) and \(\psi_\alpha\) on \(N\) is
\[
\psi_\alpha\circ h\circ \phi_\alpha^{-1}
=
\phi_\alpha\circ h^{-1}\circ h\circ \phi_\alpha^{-1}
=
\operatorname{id}.
\]
Since the charts \(\psi_\alpha\) have just been shown to be \(J_N\)-holomorphic coordinates, it follows that \(h\) is biholomorphic. In particular, the homeomorphism $h$ is $C^\infty$ and $J_N=h_*J_X$ for every $y \in N$. 
\end{proof}
\subsection{Proof of Theorem \ref{theorem main}}
Since \(h\) is bi-Lipschitz, both \(h\) and \(h^{-1}\) are differentiable almost everywhere. At points where both differentials exist, the chain rule gives \(D(h^{-1})_{h(x)}=(Dh_x)^{-1}\). Thus, for almost every \(y\in \Gamma\backslash G\) and \(x=h^{-1}(y)\),
\[
(h_*J_X)(y)
=
Dh_{x}\circ J_X(x)\circ
(Dh_{x})^{-1}
\]
is well-defined for almost every \(y\in \Gamma\backslash G\), and gives an \(L^\infty\) almost complex structure on \(\Gamma\backslash G\). 

To apply Lemma \ref{lemma cocycle ergodic}, we first check that $h_\ast J_X$ satisfies \eqref{eq:invariant}.
\begin{lemma}
For almost every $y \in \Gamma\backslash G$, we have
\[
h_*J_X (Ay)
=
DA\circ h_*J_X (y)\circ DA^{-1}.
\]
\end{lemma}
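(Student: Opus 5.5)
The plan is to differentiate the conjugacy relation \(h\circ f=A\circ h\) at points where all the relevant differentials exist, and combine it with the holomorphicity of \(f\). Throughout, tangent spaces of \(\Gamma\backslash G\) are identified with \(\mathfrak g\) by right-invariant (or left, consistently with the convention making \(DA\) a constant linear map) trivialization. Under this identification, the differential of the affine automorphism \(A\) at every point is the constant map \(DA\in\operatorname{Aut}(\mathfrak g)\); any translation part acts trivially in this trivialization. This is what makes \(h_*J_X\) a map into \(\operatorname{End}_{\mathbb R}(\mathfrak g)\) and \eqref{eq:invariant} meaningful.

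First, fix a full-measure set. Let \(E\subset X\) be the set of \(x\) where \(h\) is differentiable at \(x\) and at \(f(x)\), and \(h^{-1}\) is differentiable at \(h(x)\) and at \(h(f(x))\). Since \(h\) and \(h^{-1}\) are Lipschitz, Rademacher's theorem gives full measure sets of differentiability. Bi-Lipschitz maps and the diffeomorphisms \(f\), \(A\) preserve null sets (each sends Lebesgue-null sets to null sets, and Haar measure is in the smooth measure class). Hence \(h(E)\) has full \(m\)-measure. For \(x\in E\), the chain rule applied to \(h\circ f=A\circ h\), with \(f\) and \(A\) smooth, gives
\[
Dh_{fx}\circ Df_x = DA\circ Dh_x .
\]
All maps here are invertible, by the relation \(D(h^{-1})_{h(x)}=(Dh_x)^{-1}\) recorded above.

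Next, compute directly. Set \(y=h(x)\), so that \(Ay=h(fx)\). Then
\[
(h_*J_X)(Ay)=Dh_{fx}\,J_X(fx)\,(Dh_{fx})^{-1}
= Dh_{fx}\,Df_x\,J_X(x)\,Df_x^{-1}\,(Dh_{fx})^{-1},
\]
using \(J_X(fx)=Df_x J_X(x) Df_x^{-1}\), which is the holomorphicity of \(f\) (Example \ref{ex complex-structure}). Substituting \(Dh_{fx}Df_x=DA\,Dh_x\) yields \(DA\,Dh_x J_X(x)(Dh_x)^{-1}DA^{-1}=DA\circ(h_*J_X)(y)\circ DA^{-1}\), which is the claim for every \(y\in h(E)\).

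The only delicate step is the measure-theoretic bookkeeping: the chain rule requires differentiability of \(h\) at both \(x\) and \(fx\), and the a.e. sets must be transported correctly. This is handled by the null-set preservation argument above; the algebra itself is routine.
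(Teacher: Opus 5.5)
Your proof is correct and matches the paper's. Both restrict to the full-measure set where $h$ is differentiable at $x$ and $fx$ and $h^{-1}$ is differentiable at $h(x)$ and $h(fx)$, differentiate $h\circ f=A\circ h$ to get $Dh_{fx}\circ Df_x=DA\circ Dh_x$, and combine this with $Df_x\circ J_X(x)=J_X(fx)\circ Df_x$. Your explicit null-set bookkeeping only spells out what the paper takes for granted, and on $\Gamma\backslash G$ it is the left-invariant trivialization, as in the paper, that descends and makes $DA$ constant.
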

\begin{proof}
Choose \(x\) in a full-measure set such that \(h\) is differentiable at both \(x\) and \(fx\), and such that \(h^{-1}\) is differentiable at both \(h(x)\) and \(h(fx)\). Differentiating the conjugacy relation \(h\circ f=A\circ h\) at such a point, we obtain
\[Dh_{fx}\circ Df_x=DA_y\circ Dh_x .\]
By the holomorphicity of \(f\), we have
\[Df_x\circ J_X(x)=J_X(fx)\circ Df_x.\]
Therefore, 
\[\begin{aligned}
DA_y\circ (h_*J_X)(y)
&=DA_y\circ Dh_x\circ J_X(x)\circ (Dh_x)^{-1} \\
&=Dh_{fx}\circ Df_x\circ J_X(x)\circ (Dh_x)^{-1} \\
&=Dh_{fx}\circ J_X(fx)\circ Df_x\circ (Dh_x)^{-1}\\
&=Dh_{fx}\circ J_X(fx)\circ (Dh_{fx})^{-1}\circ DA_y=(h_*J_X)(Ay)\circ DA_y.
\end{aligned}\]
This proves the lemma. 
\end{proof}
Using the left-invariant trivialization, we identify the tangent bundle \(T(\Gamma\backslash G)\) with \(\Gamma\backslash G\times\mathfrak g\). Then \(h_*J_X\) is represented by a measurable map
\[
h_*J_X \colon \Gamma\backslash G\to \operatorname{End}(\mathfrak g).
\]
By Lemma~\ref{lemma cocycle ergodic}, there exists a finite-dimensional subspace $\mathcal E_A\subset C^\infty(\Gamma\backslash G,\operatorname{End}_\mathbb R(\mathfrak g))$ depending only on \(A\), and an element \(J_{\Gamma\backslash G}\in \mathcal E_A\), such that
\[
h_*J_X(y)=J_{\Gamma\backslash G}(y),
\qquad \text{for }m\text{-a.e. }y\in \Gamma\backslash G.
\]
Since \(J_{\Gamma\backslash G}\) agrees almost everywhere with \(h_*J_X\) and \((h_*J_X)^2=-\operatorname{Id}\), smoothness implies \(J_{\Gamma\backslash G}^2=-\operatorname{Id}\) everywhere. Hence \(J_{\Gamma\backslash G}\) is a smooth almost complex structure. By Lemma~\ref{lemma complex structure}, \(J_{\Gamma\backslash G}\) is integrable, and \(h_*J_X\) agrees everywhere with \(J_{\Gamma\backslash G}\). Moreover,
\[
h\colon (X,J_X)\to (\Gamma\backslash G,J_{\Gamma\backslash G})
\]
is biholomorphic. In particular, the affine automorphism
\[
A=h\circ f\circ h^{-1}: (\Gamma\backslash G,J_{\Gamma\backslash G}) \to (\Gamma\backslash G,J_{\Gamma\backslash G})
\]
is biholomorphic and \(f\) is biholomorphically conjugate to $A$. Moreover, if $A$ is weakly mixing, by Lemma \ref{lemma cocycle mixing}, the complex structure $J_{\Gamma\backslash G}$ is constant in the left-invariant trivialization, and hence left-invariant.

\subsection{Proof of Corollary~\ref{coro single}}
Pass to a finite nilmanifold cover \(\pi_N\colon \widetilde N\to N\). After replacing it by an \(A\)-invariant finite cover if necessary, the affine Anosov automorphism \(A\) lifts to an affine Anosov automorphism \(\widetilde A\) of \(\widetilde N\). Taking the corresponding finite cover \(\pi_X:\widetilde X\to X\), the conjugacy \(h\) lifts to a bi-Lipschitz conjugacy \(\widetilde h\), and \(f\) lifts to a holomorphic Anosov diffeomorphism \(\widetilde f\) such that
\[
\widetilde h\circ \widetilde f=\widetilde A\circ \widetilde h .
\]

The map \(\widetilde A\) is an affine Anosov automorphism of the nilmanifold \(\widetilde N\), and hence is mixing with respect to Haar measure by the standard mixing criterion for nilmanifold automorphisms. Applying Theorem~\ref{theorem main}, we obtain a left-invariant complex structure \(J_{\widetilde N}\) on \(\widetilde N\) such that
\[
\widetilde h\colon \widetilde X\to (\widetilde N,J_{\widetilde N})
\]
is biholomorphic.

It remains to descend \(J_{\widetilde N}\) to \(N\). For every deck transformation \(\delta_N\) of \(\pi_N\), there is a deck transformation \(\delta_X\) of \(\pi_X\) satisfying \(\widetilde h\circ\delta_X=\delta_N\circ\widetilde h\). Since the lifted complex structure on \(\widetilde X\) is invariant under \(\delta_X\), its push-forward \(J_{\widetilde N}\) is invariant under \(\delta_N\). Therefore the deck group acts on \((\widetilde N,J_{\widetilde N})\) by holomorphic affine automorphisms, and \(J_{\widetilde N}\) descends to a complex infra-nilmanifold structure \(J_N\) on \(N\). This finishes the proof.

\subsection{Proof of Corollary~\ref{coro higher rank}}
As in the proof of Corollary~\ref{coro single}, after passing to a finite nilmanifold cover and then descending the resulting complex structure, it suffices to consider the case where \(N\) is a nilmanifold.

By the global rigidity theorem of Hertz--Wang~\cite{hw14}, the action \(\alpha\) is \(C^\infty\)-conjugate to its linearization 
\[
\rho\colon \mathbb Z^k\to \operatorname{Aff}(N)
\]
by a \(C^\infty\) diffeomorphism \(h\colon X\to N\). Choose \(\mathbf n_0\in\mathbb Z^k\) such that \(\alpha(\mathbf n_0)\) is Anosov.
Then \(\rho(\mathbf n_0)\) is an affine Anosov automorphism, and hence is mixing with respect to the Haar measure. Applying Theorem~\ref{theorem main} to \(\alpha(\mathbf n_0)\) and \(\rho(\mathbf n_0)\), we obtain a complex nilmanifold structure \(J_N\) on \(N\) such that
\[
h\colon X\to (N,J_N)
\]
is biholomorphic.

It remains to check that the whole affine action is holomorphic with respect
to \(J_N\). For every \(\mathbf n\in\mathbb Z^k\), we have
\[
\rho(\mathbf n)=h\circ \alpha(\mathbf n)\circ h^{-1}.
\]
Since \(\alpha(\mathbf n)\) is holomorphic and \(h\) is biholomorphic, it follows that \(\rho(\mathbf n)\) is holomorphic for every \(\mathbf n\in\mathbb Z^k\).

\section{An ergodic but not weakly mixing example}\label{sec walters}
We include the following example to show that the weak mixing assumption cannot be replaced by ergodicity in Theorem~\ref{theorem main} and Corollary~\ref{coro 1}. The construction is inspired by an example of Walters~\cite{wal68}; see also \cite[Section~3]{dwwx25}.
\begin{example}\label{example:ergodic-not-enough}
Let $ M=\TT^{10}=\TT^4\times \TT^4\times \TT^2$ with coordinates \((x_1,x_2,t,s)\). Endow \(M\) with the constant complex structure \(J_0\) defined by
\[
        J_0(u_1,u_2,a,b)=(-u_2,u_1,-b,a),
        \qquad
        (u_1,u_2,a,b)\in
        \RR^4\oplus \RR^4\oplus \RR\oplus \RR .
\]

Let \(L\in \SL(4,\ZZ)\) be an ergodic toral automorphism with a pair of complex conjugate eigenvalues $ \lambda=e^{i\theta}$ and $\bar{\lambda}=e^{-i\theta}$ that are not roots of unity. Let \(V=V_{\lambda,\bar\lambda}\subset \RR^4\) be the real two-dimensional invariant subspace corresponding to the pair \(\lambda,\bar\lambda\). After choosing linear coordinates on \(V\), we may identify \(L|_V\) with the rotation \(R_\theta\), where \(R_\eta\) denotes rotation by angle \(\eta\). Write $ \alpha=\frac{\theta}{2\pi}$ and choose \(\beta\in \RR/\ZZ\) such that \(1,\alpha,\beta\) are linearly independent over \(\QQ\). Define
\[
A\colon M\to M ,\qquad(x_1,x_2,t,s)
        \mapsto
        \bigl(Lx_1,Lx_2,t+\alpha,s+\beta\bigr).
\]
Then \(A\) is an ergodic holomorphic affine automorphism of \((M,J_0)\), but it is not weakly mixing (see, for example, \cite{knbook}).

Choose a non-zero vector \(v\in V\), and define a smooth \(\RR^4\)-valued periodic function
\[
        \tau(t)=R_{2\pi t}v.
\]
Let \(\bar\tau\colon \TT^1\to \TT^4\) be its projection modulo \(\ZZ^4\). Define
\[
        g(x_1,x_2,t,s)
        =
        \bigl(x_1+\bar\tau(t),x_2,t,s\bigr).
\]
Since \(L|_{V}=R_\theta\), we have
\[
\tau(t+\alpha)=R_{2\pi(t+\alpha)}v
=R_\theta R_{2\pi t}v
=L\tau(t),
\]
and therefore \(g\circ A=A\circ g\).

Define the pull-back complex structure \(J:=g^*J_0\), given by
\[
        J_x
        =
        (Dg_x)^{-1}\circ (J_0)_{g(x)}\circ Dg_x .
\]
Since $g$ commutes with $A$ and \(A^*J_0=J_0\), it follows that
\[
        A^*J
        =
        A^*g^*J_0
        =
        g^*A^*J_0
        =
        J .
\]
Thus \(J\) is \(A\)-invariant, and \(A\) is holomorphic with respect to \(J\).

However, \(J\) is not left-invariant with respect to the standard Lie group structure on \(M\). Indeed,
\[
        Dg(\partial_t)=\partial_t+\tau'(t),
        \qquad
        Dg(\partial_s)=\partial_s,
\]
and therefore
\[
        J(\partial_s)
        =
        Dg^{-1}\bigl(J_0(Dg(\partial_s))\bigr)
        =
        -\partial_t+\tau'(t).
\]
Since \(\tau'(t)\) is not constant, the coefficients of \(J\) depend non-trivially on \(t\). Thus \(J\) is not left-invariant.
\end{example}\begin{remark}
The complex manifold \((\TT^{10},J)\) constructed above is still biholomorphic to a complex torus, but the biholomorphism is given by the non-affine map \(g\). This is analogous to Walters's example discussed in \cite[Section~3]{dwwx25}: the element in the centralizer is not affine with respect to the original homogeneous structure, but becomes affine after
changing the homogeneous structure. 

One may ask whether a complex structure preserved by an ergodic affine automorphism can become left-invariant after a suitable change of the compatible homogeneous structure. The weak mixing assumption in Theorem~\ref{theorem main} gives precisely the stronger conclusion: the complex structure is already left-invariant without changing the homogeneous structure.
\end{remark}

\bibliographystyle{plain}
\bibliography{ref}
\end{document}